\newtheorem{thm}{Theorem}[section]
\newtheorem{lem}[thm]{Lemma}
\theoremstyle{definition}
\newtheorem{defn}[thm]{Definition}
\theoremstyle{remark}
\newtheorem{rem}[thm]{Remark}
\numberwithin{equation}{section}
\newcommand{\set}[1]{\left\{#1\right\}}
\newcommand{\Real}{\mathbb R}
\newcommand{\Natural}{\mathbb N}
\newcommand{\such}{\ | \ }
\newcommand{\nin}{n \in \Natural}
\newcommand{\kin}{k \in \Natural}
\newcommand{\prob}{\mathbb{P}}
\newcommand{\Exp}{\mathcal E}
\newcommand{\expec}{\mathbb{E}}
\newcommand{\expecp}{\expec_\prob}
\newcommand{\probtriple}{(\Omega, \mathcal{F}, \prob)}
\newcommand{\basisp}{(\Omega, \, \bF, \, \prob)}
\newcommand{\F}{\mathcal{F}}
\newcommand{\cadlag}{c\`adl\`ag}
\newcommand{\caglad}{c\`agl\`ad}
\newcommand{\ud}{\mathrm d}
\newcommand{\liminfn}{\liminf_{n \to \infty}}
\newcommand{\limn}{\lim_{n \to \infty}}
\newcommand{\zi}{{\Real_+}}
\newcommand{\zic}{[0,\infty]}
\newcommand{\zo}{[0, 1)}
\newcommand{\dya}{\mathbb{D}}
\newcommand{\on}{\, ; \,}
\newcommand{\cS}{{}^\mathsf{c} \kern-0.21em S}
\newcommand{\cH}{{}^\mathsf{c} \kern-0.23em H}
\newcommand{\cMM}{{}^\mathsf{c} \kern-0.19em [M, M]}
\newcommand{\pare}[1]{\left(#1\right)}
\newcommand{\bra}[1]{\left[#1\right]}
\newcommand{\dbra}[1]{[\kern-0.15em[ #1 ]\kern-0.15em]}
\newcommand{\dbraco}[1]{[\kern-0.15em[ #1 [\kern-0.15em[}
\newcommand{\dbraoc}[1]{]\kern-0.15em] #1 ]\kern-0.15em]}
\newcommand{\dbraoo}[1]{]\kern-0.15em] #1 [\kern-0.15em[}
\newcommand{\oL}{\overline{L}}
\newcommand{\bF}{\mathbf{F}}
\newcommand{\dfn}{\, := \,}
\newcommand{\indic}{\mathbb{I}}
\newcommand{\tir}{t \in \Real_+}
\newcommand{\sir}{s \in \Real_+}
\newcommand{\Lc}{\mathcal{L}_0}
\newcommand{\Mz}{\mathcal{M}_0}
\newcommand{\rl}{\rho_L}
\newcommand{\xix}{{(x, x^*)}}
\newcommand{\xixw}{{x, x^*}}
\begin{document}

\title{On the characterisation of honest times that avoid all stopping times}%
\author{Constantinos Kardaras}%
\address{Constantinos Kardaras, Statistics Department, London School of Economics and Political Science, 10 Houghton Street, London, WC2A 2AE, UK.}%
\email{k.kardaras@lse.ac.uk}%

\thanks{The author would like to thank two anonymous referees for helpful suggestions that greatly improved the presentation and content of the paper.}%
\subjclass[2010]{60G07, 60G44}%
\keywords{Honest times; times of maximum; non-negative local martingales; running supremum}%

\date{\today}%

\begin{abstract}
We present a short and self-contained proof of the following result: a random time is an honest time that avoids all stopping times if and only if it coincides with the (last) time of maximum of a nonnegative local martingale with zero terminal value and no jumps while at its running supremum, where the latter running supremum process is continuous. Illustrative examples involving local martingales with discontinuous paths are provided.
\end{abstract}

\maketitle


\section{The Characterisation Result} \label{sec: hon times avoid stop times}

\subsection{Honest times that avoid all stopping times}
Let $\basisp$ be a filtered probability space, where $\bF = (\F_t)_{t \in \zi}$ is a filtration satisfying the usual conditions of right-continuity and saturation by $\prob$-null sets of $\F \dfn \bigvee_{t \in \Real_+} \F_t$. All (local) martingales and supermartingales on $\basisp$ are assumed to have $\prob$-a.s. \cadlag \ paths.

\begin{defn} \label{defn: rand_times}
A \textsl{random time} is a $\zic$-valued, $\F$-measurable random variable. The random time $\rho$ is said to \textsl{avoid all stopping times} if $\prob[\rho = \tau] = 0$ holds whenever $\tau$ is a (possibly, infinite-valued) stopping time. The random time $\rho$ is called an \textsl{honest time} if for all $\tir$ there exists an $\F_t$-measurable random variable $R_t$ such that $\rho = R_t$ holds on $\set{\rho \leq t}$.
\end{defn}

Honest times constitute the most important class of random times outside the realm of stopping times. They have been extensively studied in the literature, especially in relation to filtration enlargements. It is impossible to present here the vast literature on the subject of honest times; we indicatively mention the early papers \cite{MR0326848}, \cite{MR509204} \cite{MR511775},  \cite{MR519998} and \cite{MR519996}, as well as the monographs \cite{MR604176} and \cite{MR884713}. Lately, there has been considerable revival to the study of honest times, due to questions arising from the field of Financial Mathematics---see, for example, \cite{MR1802597}, \cite{NP}, \cite{FJS} and the references therein.

\subsection{The class $\Mz$} \label{subsec: class_M}

Define $\Mz$ to be the class of all nonnegative local martingales $L$ such that $L_0 = 1$, the running supremum process $L^*  \dfn \sup_{t \in [0, \cdot]} L_t$ is continuous (up to a $\prob$-evanescent set), and $\prob \bra{L_\infty = 0} = 1$ holds, where $L_\infty \dfn \lim_{t \to \infty} L_t$. (Note that the limit in the definition of $L_\infty$  exists in the $\prob$-a.s. sense, in view of the nonnegative supermartingale convergence theorem.) 

For $L \in \Mz$, define\footnote{As usual, for any \cadlag \ process $X$, $X_{-}$ denotes the \caglad \ process defined in a way such that $X_{t-}$ is the left limit of $X$ at $t \in (0, \infty)$; by convention, we also set $X_{0-} = X_0$.}
\begin{equation} \label{eq: rl}
\rl \dfn \sup \set{\tir \such L_{t-} = L^*_{t-}},
\end{equation}
where note that $L_{0-} = 1 = L^*_{0-}$ implies that the (random) set $\set{\tir \such L_{t-} = L^*_{t-}}$ is non-empty. Since $\prob \bra{L_\infty = 0} = 1$ holds for $L \in \Mz$, it follows that $\prob \bra{\rl < \infty} = 1$.

For $L \in \Mz$ and $\tir$, define $R_t \dfn \sup \set{s \in [0, t] \such L_{s-} = L^*_{s-}} \wedge t$, which is an $\F_t$-measurable random variable such that $\rl = R_t$ holds on $\set{\rl \leq t}$. It follows that $\rl$ is an honest time whenever $L \in \Mz$.

\subsection{The class $\Lc$}
Let $L \in \Mz$. In view of \eqref{eq: rl}, $\rl$ coincides with the end of of the predictable set $\set{L_- = L_-^*}$. Using the $\prob$-a.s. left-continuity of $L_-$ and the $\prob$-a.s. continuity of $L^*$, as well as the definition of $\rl$ from \eqref{eq: rl}, we obtain that $L_{\rl -} = L^*_{\rl -} = L^*_{\rl}$ holds in the $\prob$-a.s. sense. (In particular, the ``$\sup$'' in \eqref{eq: rl} is really a ``$\max$''.) If one wishes to ensure that $\rl$ is an actual time of maximum of $L$, it suffices to ask that $L$ has no jumps when $L_-$ is at its running supremum. Motivated by this observation, we define the class $\Lc$ to consist of all $L \in \Mz$ with the additional property that $\set{L_- = L^*_-} \subseteq \set{\Delta L = 0}$ holds up to a $\prob$-evanescent set.\footnote{One could also ask that $\set{L_- = L^*_-} \subseteq \set{\Delta L > 0}$  holds up to a $\prob$-evanescent set; given that $L \in \Lc$, only downwards jumps are possible when $L_-$ is at its running supremum. Furthermore, note that a process $L \in \Lc$  never jumps (downwards) when $L_-$ is at its running supremum; however, it may jump (upwards) \emph{to} its running supremum. For a concrete example of such a case, see \S \ref{subsec: other_exa}.} Whenever $L \in \Lc$, it $\prob$-a.s. holds that $L_{\rl-} = L_{\rl} = L_{\rl}^*$; in fact, as Theorem \ref{thm: main} will imply, the previous random variables are also equal to $ L^*_{\infty}$, which makes $\rl$ a time of \emph{overall} maximum of $L \in \Lc$. On the other hand, if $L \in \Mz \setminus \Lc$ it may happen that $L$ does not achieve its overall supremum; furthermore, it may also happen that $\rl$ fails to avoid all stopping times---for both previous points, see Remark \ref{rem: M vs L}.

\subsection{The characterisation result}

The following result shows that, for $L \in \Lc$, the random time $\rl$ defined in \eqref{eq: rl} is the canonical example of an honest time that avoids all stopping times.

\begin{thm} \label{thm: main}
For a random time $\rho$, the following two statements are equivalent:
\begin{enumerate}
 \item $\rho$ is an honest time that avoids all stopping times.
 \item $\rho = \rl$ holds in the $\prob$-a.s. sense for some $L \in \Lc$.  
\end{enumerate}
Under (any of) the previous conditions, the equality $L_{\rho-} = L_\rho = L^*_\infty$ holds in the $\prob$-a.s. sense; furthermore, $\prob \bra{\rho > t \such \F_t} = L_t / L^*_t$ in the $\prob$-a.s. sense is valid for all $\tir$.
\end{thm}

We proceed with some remarks on Theorem \ref{thm: main}, the proof of which is given in Section \ref{sec: proof}. Section \ref{sec: examples} contains examples involving jump processes, illustrating Theorem \ref{thm: main}.

\begin{rem}
Along with $\rl$ from \eqref{eq: rl}, for $L \in \Mz$ define also $\rl' \dfn \sup \set{\tir \such L_t = L^*_t}$. (When $L \in \Lc$, it is straightforward to check that $\prob \bra{\rl \leq \rl'} = 1$.) 
Under the additional proviso that the filtration is continuous---meaning that all martingales on $\basisp$ have $\prob$-a.s. continuous paths---it was shown in \cite[Corollary 2.4 and Theorem 4.1]{MR2247846} that a random time $\rho$ is an honest time\footnote{Actually, both \cite{MR2247846} and \cite{NP} define an honest time as the end of an optional set, a concept that can be seen to be equivalent to the one in Definition \ref{defn: rand_times}; see, for example, \cite[Proposition 5.1]{MR604176}.} that avoids all stopping times if and only if $\rho = \rl'$ holds for some $L \in \Mz$. When the filtration is continuous, $\prob \bra{\rl = \rl'} = 1$ holds and the classes $\Mz$ and $\Lc$ coincide; therefore, the aforementioned results in \cite{MR2247846} constitute a special case of Theorem \ref{thm: main}. In \cite[Theorem 3.2]{NP}, it is shown that whenever $\rho$ is an honest time that avoids all stopping times, then $\rho = \rl'$ holds for some $L \in \Mz$. Although there is no assumption regarding filtration continuity in \cite{NP}, Theorem \ref{thm: main} is stronger since it gives a full characterisation; indeed, when the filtration fails to be continuous, Remark \ref{rem: M vs L} shows that $\Lc$ may be a strict subclass of $\Mz$. 
Note that the random time $\rl$ (for $L \in \Mz$) is considered in the present paper in place of $\rl'$ that was used in \cite{NP}, as it ties better with the definition of the important class $\Lc \subseteq \Mz$. Finally, it should be mentioned that the arguments in \cite{NP} make heavy use of previously-established results regarding so-called processes of class $(\Sigma)$; in contrast,  Section \ref{sec: proof} contains a relatively short and self-contained proof of Theorem \ref{thm: main}.
\end{rem}

\begin{rem} \label{rem: M vs L}
Consider a complete probability space $(\Omega, \, \F, \, \prob)$ that supports a $\Real_+$-valued random variable $\tau$ such that $\prob \bra{\tau > t} = e^{-t}$ holds for all $\tir$. Let $\bF$ be the usual augmentation of the smallest filtration which makes $\tau$ a stopping time. Define the process $L$ via $L_t = \exp(t) \indic_{\set{t < \tau}}$ for all $\tir$. It is straightforward to check that $L \in \Mz$, as well as $\rho_L = \tau$. In particular, $\rl$ fails the requirement to avoid all stopping times in a dramatic fashion, since it is actually equal to a stopping time. Note also that $L \notin \Lc$, since $\Delta L_{\rho_L} = - L_{\rho_L -} = - \exp(\tau) < 0$ holds.
\end{rem}

\begin{rem} \label{rem: unique}
Let $\rho$ be an honest time that avoids all stopping times. The process $L \in \Lc$ such that $\rho = \rl$, which exists in view of Theorem \ref{thm: main}, is necessarily unique (up a $\prob$-evanescent set). Indeed, let $M \in \Lc$ be another process such that $\prob \bra{\rho = \rho_M} = 1$. In view of Theorem \ref{thm: main}, one obtains the process equality $L / L^* = M / M^*$, up to a $\prob$-evanescent set. The integration-by-parts formula implies that
\begin{align*}
\frac{L}{L^*} &= 1 + \int_0^\cdot \frac{1}{L^*_t} \ud L_t -  \int_0^\cdot \frac{L_t}{(L^*_t)^2} \ud L^*_t   \\
&= 1 + \int_0^\cdot \frac{1}{L^*_t} \ud L_t -  \int_0^\cdot \frac{1}{L^*_t} \ud L^*_t   = 1 + \int_0^\cdot \frac{1}{L^*_t} \ud L_t - \log \pare{L^*_t},
\end{align*}
where the facts that $\int_0^\infty \indic_{\set{L_t < L^*_t}} \ud L^*_t = 0$ and $L^*$ has continuous paths were used in the previous equalities. The above calculation provides the Doob-Meyer decomposition of $L / L^*$; in exactly the same way, we obtain that $M / M^*= 1 + \int_0^\cdot \pare{1 / M^*_t} \ud M_t -  \log \pare{M^*_t}$ is the Doob-Meyer decomposition of $M / M^*$. Combining the equality $L / L^* = M / M^*$ with uniqueness of the Doob-Meyer decomposition, the process equalities $\log \pare{L^*} = \log \pare{M^*}$ and $\int_0^\cdot \pare{1 / L^*_t} \ud L_t = \int_0^\cdot \pare{1 / M^*_t} \ud M_t$ follow; from these, one concludes in a straightforward way that $L = M$.
\end{rem}


\section{Examples Involving Processes with Jumps} \label{sec: examples}

In this section we present examples where the process $L \in \Lc$ corresponding (in view of Theorem \ref{thm: main}) to an honest time $\rho$ that avoids all stopping times has jumps. By Remark \ref{rem: unique}, the aforementioned correspondence is one-to-one; then, it follows that Theorem \ref{thm: main} has indeed a wider scope compared to the corresponding result that restricts filtrations to be continuous.

\subsection{Maximum of downwards drifting spectrally negative L\'evy processes with paths of infinite variation} \label{subsec: Levy_exa}

On the filtered probability space $\basisp$, assume that $X$ is a one-dimensional \cadlag \ L\'evy process with $X_0 = 0$. For information about L\'evy processes, the interested reader can check \cite{MR1739520}, a book which we shall be referring to in the following discussion. 

The probability law of the process $X$ can be fully characterised by its \textsl{L\'evy triplet} $(\alpha, \sigma^2, \nu)$, where $\alpha \in \Real$ equals the drift rate of the L\'evy process $X - \sum_{t \leq \cdot} \Delta X_t \indic_{\set{|\Delta X_t| > 1}}$, $\sigma^2 \in \Real_+$ is the diffusion coefficient, and $\nu$ is a L\'evy measure on $\Real \setminus \set{0}$ (equipped with its Borel sigma-field), which means that $\int_{\Real \setminus \set{0}} \pare{1 \wedge |x|^2} \nu [\ud x] < \infty$.

The first assumption on $X$ is that of no positive jumps; in terms of the L\'evy measure:
\begin{enumerate}
	\item[(L1)] $\nu \bra{(0, \infty)} = 0$.
\end{enumerate}
By \cite[Example 25.11]{MR1739520}, condition (L1) implies that $\expec \bra{\exp \pare{z X_t}} < \infty$ for $z \in \Real_+$. Therefore, one may consider the \textsl{Laplace exponent} function $\theta: \Real_+ \mapsto \Real$, defined implicitly via $\exp( t \theta(z)) = \expecp \bra{\exp \pare{z X_t}}$ for $z \in \Real_+$ and $t \in \Real_+$. By the L\'evy-Khintchine representation \cite[Section 8]{MR1739520},
\begin{equation} \label{eq: Levy-Khin}
\theta(z) = \alpha z + \frac{1}{2} \sigma^2 z^2 + \int_{(- \infty, 0)} \pare{\exp(z x) - 1 - z x \indic_{[-1,0)} (x)} \nu [\ud x], \quad \forall z \in \Real_+.
\end{equation}

The following is our second assumption on $X$:
\begin{enumerate}
	\item[(L2)] $\alpha + \int_{(- \infty, -1)} x \nu [\ud x] < 0$.
\end{enumerate}
Given (L1), condition (L2) is equivalent to asking that $\prob \bra{\lim_{t \to \infty} X_t = - \infty} = 1$, i.e., that $X$ is downwards drifting. To wit, first note that the function $\theta$ has a derivative $\theta'$ on $(0, \infty)$, and it is straightforward to see that $\theta'(0+) := \lim_{z \downarrow 0} \theta' (z) = \alpha + \int_{(- \infty, -1)} x \nu [\ud x] < 0$, the last strict inequality holding from condition (L2). A straightforward argument using the equality $\theta (z) = \log \pare{ \expec \bra{\exp \pare{z X_1}} }$ for all $z \in \Real_+$ shows that $\expec \bra{X_1} = \theta'(0+) < 0$, which immediately implies that $\prob \bra{\lim_{t \to \infty} X_t = - \infty} = 1$, in view of the law of large numbers.

Finally, we introduce the last assumption on $X$, equivalent to saying that the paths of $X$ are of infinite (first) variation:
\begin{enumerate}
	\item[(L3)] If $\sigma^2 = 0$, then $\int_{(-1, 0)} |x| \nu [\ud x] = \infty$.
\end{enumerate}

With $X^* \dfn \sup_{t \in [0, \cdot]} X_t$ denoting the running supremum process of $X$, define the random time
\begin{equation} \label{eq: rx}
\rho \dfn \sup \set{\tir \such X_{t-} = X^*_{t-}}.
\end{equation}
In what follows, we shall show that $\rho$ is an honest time that avoids all stopping times, by explicitly computing $L \in \Lc$ such that $\rho = \rl$.

Assume the validity of all conditions (L1), (L2) and (L3). The Laplace exponent function $\theta$ defined in \eqref{eq: Levy-Khin} is such that $\theta''(z) = \sigma^2 + \int_{(- \infty, 0)} x^2 \exp(z x) \nu [\ud x]$ for $z \in (0, \infty)$; in particular, it is convex. Furthermore, $\lim_{z \to \infty} \pare{\theta(z) / z^2} = \sigma^2 / 2$, while $\lim_{z \to \infty} \pare{\theta(z) / z} = \alpha - \int_{(-1, 0)} x \nu [\ud x] = \infty$ holds if $\sigma^2 = 0$ in view of condition (L3). It follows that $\lim_{z \to \infty} \theta(z) = \infty$.  The facts $\theta(0) = 0$, $\theta'(0+) < 0$ and $\lim_{z \to \infty} \theta(z) = \infty$, combined with the convexity of $\theta$, imply that there exists a unique $z_0 \in (0, \infty)$ such that $\theta(z_0) = 0$. A straightforward argument using the L\'evy property of $X$ and the definition of $\theta$ shows that the process $L \dfn \exp(z_0 X)$ is a martingale on $\basisp$ such that $L_0 = 1$, $L^*$ has continuous paths, and $\lim_{t \to \infty} L_t = 0$, all holding in the $\prob$-a.s. sense. It follows that $L \in \Mz$; furthermore, since $z_0 > 0$, a comparison of \eqref{eq: rl} and \eqref{eq: rx} shows that $\rho = \rho_L$.

In order to show that $L \in \Lc$, it remains to establish that the set $\set{L_- = L_-^*, \, \Delta L \neq 0}$ is $\prob$-evanescent. Since $L = \exp(z_0 X)$ with $z_0 > 0$, this last condition is equivalent to $\prob$-evanescence of $\set{X_- = X_-^*, \, \Delta X \neq 0}$, which is exactly the content of the following result.

\begin{lem}
Assume the validity of conditions \emph{(L1)}, \emph{(L2)} and \emph{(L3)}. Then, $\set{X_- = X_-^*, \, \Delta X \neq 0}$ is $\prob$-evanescent.
\end{lem}

\begin{proof}
Condition (L3) implies that $\liminf_{t \downarrow 0} \pare{X_t / t} = - \infty$; indeed, this follows from \cite[Theorem 47.1]{{MR1739520}}. (Look also at \cite[Definition 11.9]{{MR1739520}} for the concept of L\'evy processes of so-called ``type C.'') It follows that $\prob \bra{\inf_{s \in [0, t]} X_s = 0} = 0$ holds for all $t \in (0, \infty)$. Furthermore, in view of \cite[Remark 45.9]{{MR1739520}}, the probability laws of $\inf_{s \in [0, t]} X_s$ and $X_t - X^*_t$ are the same for any fixed $\tir$. Combining the previous, it follows that $\prob \bra{X_t = X^*_t} = \prob \bra{\inf_{s \in [0, t]} X_s = 0} = 0$ holds for all $t \in (0, \infty)$; in particular, $\int_{\Real_+} \prob \bra{X_{t-} = X^*_{t-}} \ud t = 0$. With $\mu$ denoting the jump measure of $X$, and since $\set{X_- = X^*_-}$ is a predictable set, a use of Fubini's theorem gives
\begin{align*}
\expec \bra{\int_{\Real_+ \times (- \infty, 0)} \indic_{\set{X_{t-} = X_{t-}^*}} \mu \bra{\ud t, \ud x}} &= \expec \bra{\int_{(- \infty, 0)} \pare{ \int_{\Real_+} \indic_{\set{X_{t-} = X_{t-}^*}} \ud t } \nu \bra{\ud x}} \\
&= \int_{(- \infty, 0)} \pare{ \int_{\Real_+}  \prob \bra{X_{t-} = X_{t-}^*} \ud t } \nu \bra{\ud x} = 0,
\end{align*}
which implies that $\int_{\Real_+ \times (- \infty, 0)} \indic_{\set{X_{t-} = X_{t-}^*}} \mu \bra{\ud t, \ud x} = 0$ holds in the $\prob$-a.s. sense. The latter is equivalent to that $\set{X_- = X_-^*, \, \Delta X < 0}$ is $\prob$-evanescent. Since $\Delta X \leq 0$, the proof is complete.
\end{proof}

Note that $L$ has $\prob$-a.s. continuous paths only in the case $\nu \equiv 0$; therefore, the above construction provides a plethora of examples of honest times that avoid all stopping times for which the unique (in view of Remark \ref{rem: unique}) representative $L \in \Lc$ with the property that $\rho = \rho_L$ has jumps.

\subsection{Geometric Brownian motion with jumps no higher than its running supremum} \label{subsec: other_exa}

Consider a probability space $\probtriple$, rich enough to support the following \emph{independent} elements:
\begin{itemize}
	\item a process $W = (W_t)_{\tir}$, which is a standard Brownian motion in its natural filtration;
	\item A sequence $(\sigma_n)_{\nin}$ of independent and identically distributed random variables having the exponential law with rate parameter $\lambda \in (0, \infty)$.
	\item A sequence $(U_n)_{\nin}$ of independent and identically distributed random variables having the standard uniform law on $[0,1]$.
\end{itemize}
Define $\tau_0 \dfn 0$ and $\tau_n = \sum_{m=1}^n \sigma_m$ for all $\nin$; then, the process $N$ defined via $N_t = \sum_{n=1}^\infty \indic_{\set{\tau_n \leq t}}$ for all $\tir$ is a Poisson process (in its own filtration) with arrival rate $\lambda$. Define also the compound Poisson process $C$ via $C_t = \sum_{n=1}^{N_t} U_n$ for all $\tir$. Let $\bF$ be the usual augmentation of the smallest filtration that makes $W$ and $C$ adapted. Note that $N$ is $\bF$-adapted, and that $W$ and $C$ are independent.

Given the above ingredients, we shall construct $L \in \Lc$ that behaves like an exponential Brownian motion with parameter $\sigma \in (0, \infty)$ in each stochastic interval $\dbraco{\tau_{n-1}, \tau_n}$ for all $\nin$, and then will jump at each time $\tau_n$ to a level that will be at most equal to $L^*_{\tau_n -}$. In contrast to \S \ref{subsec: Levy_exa}, $L$ here will be allowed to jump upwards; however, the arrival rate of jumps will be finite and equal to $\lambda$.

Define $\Xi \dfn \set{(x, x^*) \in (0, \infty)^2 \such x \leq x^*}$, corresponding to the spate space of a nonnegative local martingale and its running supremum. (We do not consider $x = 0$, since it is a ``cemetery'' state for nonnegative local martingales.) For each $\xix \in \Xi$, let $F(\cdot \on \xixw)$ be the cumulative distribution function of a probability law such that $F (y \on \xixw) = 0$ holds for $y \in (- \infty, -1)$ and $F (x^* / x - 1 \on \xixw) = 1$; in other words, the probability law corresponding to $F (\cdot \on \xixw)$ does not charge any set outside $[-1, (x^* - x) / x ]$. We also ask that
\begin{equation} \label{eq: zero_mean}
\int_{\Real} y \ud F (y \on \xixw) \equiv \int_{[-1, (x^* - x) / x ]} y \ud F (y \on \xixw) = 0, \quad \forall \xix \in \Xi. 
\end{equation}
This family will be used in the following manner: for each $\nin$, conditional on the pair $(L_{\tau_n -}, L^*_{\tau_n -})$ and when $L_{\tau_n -} > 0$, the ``relative jump'' $(L_{\tau_n} - L_{\tau_n -}) / L_{\tau_n -}$ of $L$ at time $\tau_n$ will have a probability law with cumulative distribution function $F(\cdot \on L_{\tau_n - }, L^*_{\tau_n -} )$. More details on the construction are given in the next paragraph. For the time being, note that there are many choices for the class of distributions $\set{F (\cdot \on \xixw) \such \xix \in \Xi}$ satisfying the aforementioned constraints. Possibly the simplest such class is the following: for $\xix \in \Xi$, $F (\cdot \on \xixw)$ corresponds to the probability law of a two-point-mass with probability $x/x^*$ equalling $(x^* - x )/ x$ and probability $1 - x / x^*$ equalling $-1$. According to the heuristic description given above, this particular choice corresponds to $L$ jumping at each time point $\tau_n$ (and if $L_{\tau_n -} > 0$) either to its running supremum $L_{\tau_n -}^*$ with probability $L_{\tau_n -} / L^*_{\tau_n -}$ or to zero (and then staying there forever) with probability $1 - L_{\tau_n -} / L^*_{\tau_n -}$.

We now proceed to the formal inductive construction of $L$. Let $L_0 = 1$, and assume that $L$ has been defined on the stochastic interval $\dbra{0, \tau_{n-1}}$ for some $\nin$. 
If $L_{\tau_{n-1}} = 0$, define $L_t = 0$ for all $t \in (\tau_n, \infty)$ and terminate the process. If $L_{\tau_{n-1}} > 0$, first define $L$ on $\dbraco{\tau_{n-1}, \tau_n}$ via
\begin{equation} \label{eq: L_induct}
L_t = L_{\tau_{n-1}} \exp \pare{\sigma (W_t - W_{\tau_{n-1}}) - \frac{\sigma^2}{2} (t - \tau_{n-1}) }, \quad \text{for } t \in (\tau_{n-1}, \tau_n).
\end{equation}
For $\xix \in \Xi$, let $F^{-1} (\cdot \on \xixw) : [0,1] \mapsto [-1, \infty)$ denote the ``inverse'' of $F(\cdot; \xixw)$, formally defined via
\[
F^{-1} (u \on \xixw) \dfn \sup \set{y \in \Real \such F(y \on \xixw) < u}, \quad \forall u \in [0,1],
\]
and assume that the mapping $F^{-1} : [0, 1] \times \Xi \mapsto [-1, \infty)$ is (jointly) Borel-measurable. Then, according to \cite[Theorem 1.2.2]{MR2722836}, the random variable $F^{-1} (U_n \on \xixw)$ has a law with cumulative distribution function $F (\cdot \on \xixw)$. With $j_n \dfn F^{-1} \pare{U_n \on L_{\tau_n - }, L^*_{\tau_n -}}$, set $L_{\tau_n} = L_{\tau_n -} (1 + j_n)$, which defines $L$ on the whole stochastic interval $\dbra{0, \tau_{n}}$ and completes the induction step. Since $\limn \tau_n = \infty$ holds in the $\prob$-a.s. sense, it follows that $L$ is defined for all times in $\Real_+$.

We proceed in showing that $L \in \Mz$. Note that $(L_{\tau_n -}, L^*_{\tau_n -})$ is independent of $U_n$ for all $\nin$; therefore, given $(L_{\tau_n -}, L^*_{\tau_n -})$ and $L_{\tau_n -} > 0$, $j_n$ has a law with cumulative distribution function $F \pare{\cdot \on L_{\tau_n - }, L^*_{\tau_n -}}$. In particular, we have the $\prob$-a.s. inequalities $0 \leq L_{\tau_n} \leq L_{\tau_n -} (L^*_{\tau_n -} / L_{\tau_n -}) = L^*_{\tau_n -}$, which imply that $L$ stays nonnegative and does not jump over its running supremum. This shows that $L^*$ is continuous in the $\prob$-a.s. sense. Furthermore, $L$ is a nonnegative semimartingale such that $L = \Exp(\sigma W + J)$ holds, where ``$\Exp$'' denotes the stochastic exponential operator and the the pure-jump process $J$ with $\Delta J \geq - 1$ is defined via $J_t = \sum_{n=1}^{N_t} j_n$ for all $\tir$. With $\eta$ denoting the predictable compensator of the jump measure of $J$, it is straightforward to check that $\eta \bra{\ud t, \ud y} = \lambda \ud t \ud F \pare{y \on L_{t - }, L^*_{t -}}$ holds for $(t, y) \in \Real_+ \times \Real$. In particular, since Fubini's theorem and \eqref{eq: zero_mean} imply that
\[
\int_{[0, \cdot] \times \Real} y \eta \bra{\ud t, \ud y}  = \int_{[0, \cdot]} \pare{ \int_{\Real} y \ud F \pare{y \on L_{t - }, L^*_{t -}}  } \lambda \ud t = 0
\]
identically holds, it follows that $J$ is a purely discontinuous local martingale. Since $\sigma W$ is a continuous local martingale, $L = \Exp(\sigma W + J) = \Exp(\sigma W) \Exp(J)$ is a nonnegative local martingale. The law of large numbers for Brownian motion and the fact that $\sigma \in (0, \infty)$ give the limiting equality $\Exp(\sigma W)_\infty = \exp \pare{ \lim_{t \to \infty} \pare{\sigma W_t - \sigma^2 t / 2}} = 0$, valid in the $\prob$-a.s. sense. Since $\prob \bra{\Exp(J)_\infty \in \Real_+} = 1$ holds in view of the nonnegative supermartingale convergence theorem, $L_\infty = \Exp(\sigma W)_\infty \Exp(J)_\infty = 0$ holds in the $\prob$-a.s. sense, which implies that $L \in \Mz$.

In order to establish that $L \in \Lc$, which will finalise the discussion of this example, it remains to show that $\set{L_- = L_-^*, \, \Delta L \neq 0}$ is $\prob$-evanescent. For each $\nin$, note that the random variable $\sigma_n \dfn \tau_n - \tau_{n-1}$ is independent of the sigma-field generated by $\F_{\tau_{n-1}}$ and (the whole process) $W$. Furthermore, there is zero probability that an exponential Brownian motion sampled at an independent random time is equal to either its running maximum or to any fixed value. The last two facts and \eqref{eq: L_induct} imply that $\prob \bra{L_{\tau_n -} = L_{\tau_n -}^* \such \F_{\tau_{n-1}}} = 0$ holds for all $\nin$. In view of the obvious set-inclusion $\set{\Delta L \neq 0} \subseteq \bigcup_{\nin} \dbra{\tau_n, \tau_n}$, valid up to a $\prob$-evanescent set, we deduce that $\set{L_- = L_-^*, \, \Delta L \neq 0}$ is $\prob$-evanescent.

\section{Proof of Theorem \ref{thm: main}} \label{sec: proof}

During the course of the proof of Theorem \ref{thm: main}, and in an effort to be as self-contained as possible, we shall provide full details for every step.

For a random time $\sigma$ and a process $X = (X_t)_{\tir}$, $X^\sigma = (X_{\sigma \wedge t})_{\tir}$ will denote throughout the process $X$ stopped at $\sigma$. For any unexplained, but fairly standard, notation and facts regarding stochastic analysis, we refer the reader to \cite{MR1780932}.

\subsection{Doob's maximal identity} \label{subsec: aux_res}

We start by proving a slightly elaborate version of Doob's maximal identity---see \cite{MR2247846}. It will be quite useful throughout, sometimes in its ``conditional'' version. 

\begin{lem} \label{lem: Doob_maximal}
Let $L$ be a nonnegative local martingale with $L_0 = 1$. Then, $\prob \bra{ L^*_\infty > x} \leq 1/x$ holds for all $x \in (1, \infty)$. Furthermore, $\prob \bra{ L^*_\infty > x} = 1/x$ holds for all $x \in (1, \infty)$ if and only if $L \in \Mz$.
\end{lem}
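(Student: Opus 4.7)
The plan is to work throughout with the first level-crossing times $\tau_x \dfn \inf\set{t \in \zi \such L_t \geq x}$ for $x > 1$, noting that by \cadlag\ paths, $L_{\tau_x} \geq x$ holds on $\set{\tau_x < \infty}$. For the inequality, I would observe that $L^{\tau_x}$ is a nonnegative supermartingale with $L^{\tau_x}_0 = 1$, so Fatou's lemma applied along $t \to \infty$ yields
\begin{equation*}
\expec \bra{L_{\tau_x} \indic_{\set{\tau_x < \infty}}} + \expec \bra{L_\infty \indic_{\set{\tau_x = \infty}}} \leq \expec[L_0] = 1,
\end{equation*}
whence $x \prob[\tau_x < \infty] \leq 1$; the trivial inclusion $\set{L^*_\infty > x} \subseteq \set{\tau_x < \infty}$ then gives the first claim.

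For the ``if'' direction of the equality claim, assume $L \in \Mz$. Continuity of $L^*$ combined with the estimate $L^*_{\tau_x-} \leq x$ (valid because $L_s < x$ for $s < \tau_x$) forces $L^*_{\tau_x} = L^*_{\tau_x-} \leq x$, so together with $L_{\tau_x} \geq x$ one obtains $L_{\tau_x} = x$ on $\set{\tau_x < \infty}$. This also bounds $L^{\tau_x}$ by $x$ on all of $\Omega$ (on $\set{\tau_x = \infty}$ one has $L^*_\infty \leq x$), making $L^{\tau_x}$ a uniformly integrable martingale; since $L_\infty = 0$, its terminal value equals $x \indic_{\set{\tau_x < \infty}}$, and the martingale identity $\expec[L^{\tau_x}_\infty] = 1$ yields $\prob[\tau_x < \infty] = 1/x$. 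For any $y > x$, the inclusion $\set{\tau_y < \infty} \subseteq \set{L^*_\infty > x}$ then gives $\prob[L^*_\infty > x] \geq 1/y$; letting $y \downarrow x$ matches the upper bound.

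For the ``only if'' direction, the identity $\prob[L^*_\infty > x] = 1/x$, combined with $\set{L^*_\infty > x} \subseteq \set{\tau_x < \infty}$ and the inequality already proved, forces $\prob[\tau_x < \infty] = 1/x$. Feeding this back into the Fatou inequality and using $L_{\tau_x} \geq x$, all the inequalities must saturate, so both $L_{\tau_x} = x$ a.s.\ on $\set{\tau_x < \infty}$ and $L_\infty \indic_{\set{\tau_x = \infty}} = 0$; as $x \to \infty$ through rationals the events $\set{\tau_x = \infty}$ exhaust $\Omega$ (since $\prob[L^*_\infty < \infty] = 1$), yielding $L_\infty = 0$ a.s. The subtlest step---and the one I expect to demand the most care---is deducing continuity of $L^*$: outside a countable union of null sets (one per rational $x > 1$) we have $L_{\tau_x} = x$ on $\set{\tau_x < \infty}$ for every rational $x > 1$, and a hypothetical jump of $L$ strictly above its past supremum at some time $t_0$ with $L^*_{t_0-} = y \geq 1$ and $L_{t_0} = z > y$ would yield, for any rational $x \in (y, z)$, both $\tau_x = t_0$ and $L_{\tau_x} = z > x$, contradicting $L_{\tau_x} = x$ and forcing $L^*$ to be continuous.
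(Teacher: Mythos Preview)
Your proof is correct and follows essentially the same line as the paper's: both work with first-passage times $\tau_x$ and analyse the stopped process $L^{\tau_x}$, extracting the saturation condition $L_{\tau_x} = x\indic_{\{\tau_x<\infty\}}$ (which simultaneously yields $L_\infty = 0$ and continuity of $L^*$). The only cosmetic differences are that the paper uses the strict threshold $\tau_x = \inf\{t : L_t > x\}$ (so that $\{L^*_\infty > x\} = \{\tau_x < \infty\}$ exactly, avoiding your limit $y \downarrow x$) and establishes uniform integrability of $L^{\tau_x}$ unconditionally via the bound $\expec[L^*_{\tau_x}] \leq x + \expec[L_{\tau_x}] \leq x+1$, whereas you rely on Fatou for the inequality and invoke uniform integrability only under the hypothesis $L \in \Mz$.
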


\begin{proof}
For $x \in (1, \infty)$, define the stopping time $\tau_x \dfn \inf \set{t \in \Real_+ \such L_t > x}$, and note that $\set{L^*_\infty > x} = \set{\tau_x < \infty}$. Since $\expec \bra{L^*_{\tau_x}} \leq x + \expec \bra{L_{\tau_x}} \leq x+1$, $L^{\tau_x}$ is a uniformly integrable martingale for all $x \in (1, \infty)$. It follows that $x \prob \bra{ L^*_\infty > x} = x \prob [\tau_x < \infty] = \expec [x \indic_{\set{\tau_x < \infty}}] \leq \expec [L_{\tau_x}] = 1$ for $x \in (1, \infty)$, with equality holding if and only if $\prob [L_{\tau_x} = x \indic_{\set{\tau_x < \infty}}] = 1$. Whenever $L \in \Mz$, the equality $\prob [L_{\tau_x} = x \indic_{\set{\tau_x < \infty}}] = 1$ is immediate for all $x \in (1, \infty)$. Conversely, assume that $\prob [L_{\tau_x} = x \indic_{\set{\tau_x < \infty}}] = 1$ holds for all for $x \in (1, \infty)$. It is clear that $L^*$ must have $\prob$-a.s. continuous paths; furthermore, since $\prob \big[ \bigcup_{\nin} \set{\tau_n = \infty} \big] = 1$, $\prob [ L_\infty = 0] = 1$ follows. Therefore, $L \in \Mz$.
\end{proof}

Suppose that the equivalence between conditions (1) and (2) of Theorem \ref{thm: main} has been established. For fixed $\tir$, let $\oL_t \dfn \sup_{v \in [t, \infty)} L_v$; the set-inclusions $\set{\oL_t > L^*_t} \subseteq \set{\rl > t} \subseteq  \set{ \oL_t \geq L^*_t}$ and a conditional version of Lemma \ref{lem: Doob_maximal} give
\[
\frac{L_t}{L^*_t} = \prob \bra{\oL_t > L^*_t \such \F_t} \leq \prob \bra{\rl > t \such \F_t}  \leq \prob \bra{\oL_t \geq L^*_t \such \F_t} = \frac{L_t}{L^*_t} , \quad \forall \tir.
\]
Since $\prob \bra{\rho = \rl} = 1$, it follows that $\prob \bra{\rho > t \such \F_t} =  L_t / L^*_t$ holds for all $\tir$.

Implication $(2) \Rightarrow (1)$ of Theorem \ref{thm: main} is dealt with in \S \ref{subsec: proof of 2 -> 1}. The more difficult implication $(1) \Rightarrow (2)$ is the content of \S \ref{subsec: proof of 1 -> 2}; there, the fact that $L_{\rho-} = L_\rho = L^*_\infty$ holds in the $\prob$-a.s.  sense is also established (in Lemma \ref{lem: max_rho}).

\subsection{Proof of implication $(2) \Rightarrow (1)$}  \label{subsec: proof of 2 -> 1}
It has already been shown in \S \ref{subsec: class_M} that $\rl$ is an honest time if $L \in \Mz$; in particular, $\rl$ is an honest time if $L \in \Lc$. Implication $(2) \Rightarrow (1)$ will follow once we establish that $\rl$ avoids all stopping times whenever $L \in \Lc$. To this end, fix some stopping time $\tau$; it will be shown below that $\prob[\rl = \tau \such \F_\tau] = 0$ holds up to a $\prob$-null set. Since $\prob \bra{\rl = \infty} = 0$, $\prob[\rl = \tau \such \F_\tau] = 0$ trivially holds (up to a $\prob$-null set) on $\set{\tau = \infty}$. Furthermore, note that $\set{\rl = \tau < \infty, \, L_\tau < L^*_{\tau}} \subseteq \set{\tau < \infty, \, L_{\tau-} = L^*_{\tau -}, \, \Delta L_{\tau} < 0}$; since $L \in \Lc$, the latter event has zero probability, from which we obtain that $\prob[\rl = \tau \such \F_\tau] = 0$ also holds on $\set{\tau < \infty, \, L_\tau < L^*_{\tau}}$, up to a $\prob$-null set. Finally, on $\set{\tau<\infty, \, L_\tau = L^*_{\tau}}$, where in particular $L_\tau > 0$, a conditional form of Lemma \ref{lem: Doob_maximal} gives that $\prob \big[ \sup_{t \in [\tau, \infty)} L_t > L^*_\tau \such \F_\tau \big] = L_\tau / L^*_\tau = 1$ holds; therefore, $\prob[\rl = \tau \such \F_\tau] = 0$ also holds on $\set{\tau<\infty, \, L_\tau = L^*_{\tau}}$, up to a $\prob$-null set.

\subsection{Proof of implication $(1) \Rightarrow (2)$ and the equality $L_{\rho-} = L_\rho = L^*_\infty$} \label{subsec: proof of 1 -> 2}
Throughout \S \ref{subsec: proof of 1 -> 2}, \emph{fix an honest time $\rho$ that avoids all stopping times}. Let $Z$ be the the $[0,1]$-valued (\cadlag)  Az\'ema supermartingale that satisfies $Z_t = \prob[\rho > t \such \F_t]$ for all $t \in \Real_+$. The next result follows from \cite[Lemma 4.3(i) and Proposition 5.1]{MR604176}---we provide its proof for completeness.

\begin{lem} \label{lem: Z is one}
With the above notation, $\prob [ Z_\rho = 1] = 1$ holds.
\end{lem}

\begin{proof}
Let $(R^0_t)_{\tir}$ be an adapted process such that $\rho = R^0_t$ holds on $\set{\rho \leq t}$ for all $\tir$. Note that the adapted process $(R^0_t \wedge t)_{\tir}$ has the same property as well; therefore, we may assume that $R^0_t \leq t$ holds for all $\tir$. With $\dya$ denoting a dense countable subset of $\Real_+$, define the process $R \dfn \lim_{\dya \ni t \downarrow \cdot} \big( \sup_{s \in \dya \cap (0, t)} R^0_s \big)$; then, $R$ is right-continuous, adapted and non-decreasing, and $R_t \leq t$ still holds for all $\tir$. Furthermore, since for $\sir$ and $\tir$ with $s \leq t$, $\rho = R^0_s = R^0_{t}$ holds on $\set{\rho \leq s} \subseteq \set{\rho \leq t}$, it follows that $\rho = R_t$ holds on $\set{\rho \leq t}$ for all $\tir$.  Define a $\set{0, 1}$-valued optional process $I$ via $I_t = \indic_{\set{R_t = t}}$ for $t \in \Real_+$. The properties of $R$ can be seen to imply $\set{I =1} \subseteq \dbra{0 ,\rho}$, as well as $I_\rho = 1$ on $\set{\rho < \infty}$; since $\prob[\rho = \infty] = 0$ holds due to the fact that $\rho$ avoids all stopping times, we conclude that $\prob[I_\rho = 1] = 1$. Fix a finite stopping time $\tau$. Using again the fact that $\rho$ avoids all stopping times, $Z_\tau = \prob[\rho \geq \tau \such \F_\tau]$ holds. Then, $I_\tau \in \F_\tau$ and $\set{I = 1} \subseteq \dbra{0, \rho}$ imply that $\expec \bra{I_\tau Z_\tau} = \expec \bra{I_\tau \indic_{\set{\tau \leq \rho}}} = \expec \bra{I_\tau}$. Since $I$ is $\set{0,1}$-valued and $Z$ is $[0,1]$-valued, $\expec \bra{I_\tau Z_\tau} = \expec \bra{I_\tau}$ implies that $\set{I_\tau = 1} \subseteq \set{Z_\tau = 1}$. Since the latter holds for all finite stopping times $\tau$ and both $I$ and $Z$ are optional, the optional section theorem implies that $\set{I = 1} \subseteq \set{Z = 1}$, modulo $\prob$-evanescence. Then, $\prob \bra{I_\rho = 1} = 1$ implies $\prob \bra{Z_\rho = 1} = 1$. 
\end{proof}

Continuing, let $A$ be the unique (up to $\prob$-evanescence) adapted, \cadlag, nonnegative and nondecreasing process such that $\expec [V_\rho] = \expec \bra{ \int_0^\infty V_t \ud A_t}$ holds for all nonnegative optional processes $V$---in other words, $A$ is the dual optional projection of $\indic_{\dbraco{\rho, \infty}}$. Since $\expec \bra{A_\tau - A_{\tau -}} = \prob[\rho = \tau] = 0$ holds for all finite stopping times $\tau$, the optional section theorem implies that $A_0 = 0$ and $A$ has $\prob$-a.s. continuous paths. Define also $M$ as the nonnegative uniformly integrable martingale such that $M_t = \expec \bra{A_\infty \such \F_t}$ holds for all $t \in \zi$. By the definition of $A$ and $M$, note that
\[
M_t = A_t + \expec \bra{A_\infty - A_{t} \such \F_t} = A_t + \prob \bra{\rho > t \such \F_t} = A_t + Z_t, \quad \forall \tir.
\]
Given the $\prob$-a.s. continuity of the paths of $A$, it follows that $Z = M - A$ is the (additive) Doob-Meyer decomposition of $Z$. The following result provides the \emph{multiplicative} Doob-Meyer decomposition of $Z$, a topic first treated in \cite{MR0184282}. In the present case where it is known that the predictable process $A$ is actually continuous, the proof simplifies.

\begin{lem} \label{lem: mult_decomp}
With the above notation, one has $Z = L (1 - K)$, where $L$ is a nonnegative local martingale with $L_0 = 1$ and $K$ is a $[0,1]$-valued nondecreasing adapted process with $\prob$-a.s. continuous paths. Furthermore, $A = \int_0^\cdot L_t \ud K_t$ holds.
\end{lem}

\begin{proof}
For each $\nin$, define the stopping time $\zeta_n \dfn \inf \set{\tir \such Z_t < 1/n}$. Furthermore, set $\zeta \dfn \lim_{n \to \infty} \zeta_n = \inf \set{\tir \such Z_{t-} = 0 \text{ or } Z_t = 0}$.

Define $K \dfn 1 - \exp \big( - \int_0^{\zeta \wedge \cdot} (1 / Z_t) \ud A_t \big)$, which obviously is a $[0,1]$-valued nondecreasing adapted process. The fact that $A$ has $\prob$-a.s. continuous paths implies that $K$ is $\prob$-a.s. continuous on $\dbra{0, \zeta_n}$ and that $\prob \bra{K_{\zeta_n} < 1} = 1$ holds for all $\nin$. Furthermore, it is straightforward to check that $A = A^\zeta$ holds; therefore, we conclude that $K$ has $\prob$-a.s. continuous paths.

Setting $L^n \dfn Z^{\zeta_n} / (1 - K^{\zeta_n})$, a straightforward application of the integration-by-parts formula gives $L^n = 1 + \int_0^{\zeta_n \wedge \cdot} (L^n_t / Z_t) \ud M_t$, implying that $L^n$ is a nonnegative local martingale for all $\nin$. For $m \leq n$, it holds that $L^m = L^n$ on $\dbra{0, \zeta_m}$; then, the nonnegative martingale convergence theorem implies that $\ell \dfn \lim_{n \to \infty} L^n_{\zeta_n}$ exists and is $\Real_+$-valued in the $\prob$-a.s. sense. One may therefore define a nonnegative \cadlag \ process $L$ such that $L = L^n$ holds on $\dbra{0, \zeta_n}$ for all $\nin$ and $L_t = \ell$ holds for all $t \geq \zeta$. In view of Lemma \ref{lem: Doob_maximal}, the fact that $L^{\zeta_n}$ is a nonnegative martingale with $L^{\zeta_n}_0 = 1$ implies that $\prob [ L^*_{\zeta_n} > x ] \leq 1/x$ holds for all $\nin$. Since $L = L^\zeta$ and $\prob \bra{\limn \zeta_n = \zeta } = 1$, we obtain that $\prob \bra{L^*_\infty < \infty} = 1$. Therefore, defining the stopping time $\tau_k \dfn \inf \set{\tir \such L_t > k}$ for all $k \in \Natural$, it follows that $\prob \bra{\lim_{k \to \infty} \tau_k = \infty} = 1$. Furthermore, since $L = L^\zeta$, $\prob \bra{\limn \zeta_n = \zeta } = 1$, and $\expec \bra{L_{\tau_k \wedge \zeta_n}} = \expec \big[ L^{\zeta_n}_{\tau_k} \big] = 1$ holds for all $\kin$ and $\nin$, Fatou's lemma gives
\[
\expec \bra{L^*_{\tau_k}} = \expec \bra{\limn L^*_{\tau_k \wedge \zeta_n}} \leq \liminfn \expec \bra{L^*_{\tau_k \wedge \zeta_n}} \leq \liminfn \pare{k + \expec \bra{L_{\tau_k \wedge \zeta_n}} } = k+1 < \infty, \quad \forall \kin.
\]
For $0 \leq s \leq t < \infty$, the (conditional version of the) dominated convergence theorem gives
\[
\expec \bra{L^{\tau_k}_t \such \F_s} = \expec \bra{ \limn L_{\tau_k \wedge \zeta_n \wedge t} \such \F_s} = \limn \expec \bra{  L^{\zeta_n}_{\tau_k \wedge t} \such \F_s} = \limn L^{\zeta_n}_{\tau_k \wedge s} = L^{\tau_k}_s, \quad \forall \kin.
\]
It follows that $L^{\tau_k}$ is a martingale for all $\kin$; therefore, $L$ is a nonnegative local martingale.

Since $K = K^\zeta$, $L = L^{\zeta}$ and $Z = Z^{\zeta}$, we conclude that $Z = L (1 - K)$ holds. By the integration-by-parts formula, $Z = 1 + \int_0^\cdot (1 - K_t) \ud L_t - \int_0^\cdot L_t \ud K_t$ holds; comparing with the Doob-Meyer decomposition $Z = M - A$ of $Z$, and recalling that $A_0 = 0$, we obtain that $A = \int_0^\cdot L_t \ud K_t$.
\end{proof}

\begin{lem} \label{lem: K uniform}
With the above notation, $K_\rho$ has the standard uniform law.
\end{lem}

\begin{proof}
For $u \in \zo$, define the stopping time $\tau_u \dfn \inf \set{t \in \zi \such K_t > u}$, with the convention $\tau_u = \infty$ if the last set is empty. Since $K$ has $\prob$-a.s. continuous paths, $K_{\tau_u} = u$ holds $\prob$-a.s. on $\set{\tau_u < \infty}$ for all $u \in \zo$. Recalling that $A = \int_0^\cdot L_t \ud K_t$ holds from Lemma \ref{lem: mult_decomp}, a use of the change-of-time technique gives
\begin{equation} \label{eq: time change}
\int_0^\infty f(K_t) \ud A_t = \int_0^\infty f(K_t) L_t \ud K_t =  \int_0^1 L_{\tau_u} \indic_{\set{\tau_u < \infty}} f(u) \ud u, \quad \text{for any Borel } f: \zo \mapsto \Real_+.
\end{equation}
Since $Z = L (1 - K)$, the facts that $Z \leq 1$ and $K \leq u$ hold up to $\prob$-evanescence on $\dbra{0, \tau_u}$ imply that $\prob \bra{L^*_{\tau_u} \leq 1/(1 - u)} = 1$ holds for all $u \in \zo$. Therefore, $\expec[L_{\tau_u}] = 1$ holds for all $u \in \zo$. Since $\prob [\rho = \infty] = 0$, it follows that $\prob[Z_\infty = 0] = 1$; then, $\prob[Z_\infty = L_\infty (1 - K_\infty)] = 1$ implies $\prob \bra{K_\infty < 1, \, L_\infty > 0} = 0$. Therefore, for $u \in \zo$, the set-inclusion $\set{\tau_u = \infty} \subseteq \set{K_\infty < 1}$ implies $\prob \bra{L_{\tau_u} \indic_{\set{\tau_u < \infty}}= L_{\tau_u}} = 1$. Then, $\expec[L_{\tau_u}] = 1$ gives $\expec \bra{ L_{\tau_u} \indic_{\set{\tau_u < \infty}}} = 1$  for $u \in \zo$. By Fubini's Theorem and \eqref{eq: time change}, we obtain $\expec \bra{f(K_\rho)} = \expec \bra{\int_0^\infty f(K_t) \ud A_t} = \int_0^1 f(u) \ud u$. Since the latter holds for any Borel $f: \zo \mapsto \Real_+$, it follows that $K_\rho$ has the standard uniform law.
\end{proof}

\begin{lem} \label{lem: max_rho}
With the above notation, it holds that $L \in \Mz$ and $\prob \big[L_{\rho -} = L_\rho = L^*_\infty \big] = 1$.
\end{lem}

\begin{proof}
Since $\prob[Z_\rho = L_\rho (1 - K_\rho)] = 1$, Lemma \ref{lem: Z is one} gives $\prob \bra{L_\rho = 1 / (1 - K_\rho)} = 1$. Then, Lemma \ref{lem: K uniform} implies that $\prob[L_\rho > x] = \prob[K_\rho > 1 - 1/x] = 1/x$ for all $x \in (1, \infty)$. As $\prob \bra{L_\rho \leq L^*_\infty} = 1$,  Lemma \ref{lem: Doob_maximal} implies both that $L \in \Mz$ and that $\prob \bra{L_\rho = L^*_\infty} = 1$. It remains to show that $\prob \bra{L_{\rho-} = L_{\rho}} = 1$, which is equivalent to $\expec \bra{|\Delta L_\rho|} = 0$. By the definition of $A$, it holds that $\expec \bra{|\Delta L_\rho|} = \expec \big[ \int_{\Real_+} |\Delta L_t| \ud A_t \big] = 0$, the last equality holding from the fact that $A$ is such that $A_0 = 0$ and has $\prob$-a.s. continuous paths (since $\rho$ avoids all stopping times), combined with the $\prob$-a.s. countability of the (random) set $\set{\tir \such \Delta L_t \neq 0}$. 
\end{proof}

\begin{lem} \label{lem: pre_final}
With the above notation, $\prob \bra{ \rho = \rl } = 1$ holds.
\end{lem}

\begin{proof}
Since $L_{\rho -} \leq L^*_{\rho -} \leq L^*_\infty$, the equality $\prob \big[L_{\rho -} = L^*_\infty \big] = 1$ that was established in Lemma \ref{lem: max_rho} implies that $\prob \big[L_{\rho-} = L^*_{\rho-} \big] = 1$; by the definition of $\rl$ in \eqref{eq: rl}, $\prob \bra{\rho \leq \rl} = 1$ is evident. For $\tir$, let $\oL_t \dfn \sup_{v \in [t, \infty)} L_v$ and note the set-inclusions $\set{\oL_t > L^*_t} \subseteq \set{\rho > t}$ and $\set{\rl > t} \subseteq \set{ \oL_t \geq L^*_t}$, valid modulo $\prob$. A use of the conditional version of Lemma \ref{lem: Doob_maximal} gives $\prob \bra{ \oL_t \geq L^*_t \such \F_t } =  L_t / L^*_t = \prob \bra{ \oL_t > L^*_t \such \F_t }$, for all $\tir$. It follows that $\prob \bra{\rl > t} \leq \prob \bra{\rho > t}$ holds for all $\tir$. Combined with $\prob \big[ \rho \leq \rl] = 1$, we obtain $\prob \big[ \rho = \rl \big] = 1$.
\end{proof}

The next result concludes the proof of implication $(1) \Rightarrow (2)$ of Theorem \ref{thm: main}.

\begin{lem} \label{lem: final}
With the above notation, it holds that $L \in \Lc$.
\end{lem}

\begin{proof}
A use of Lemma \ref{lem: max_rho} gives $L \in \Mz$ and $\prob \bra{\Delta L_\rho \neq 0} = 0$. If the set $\set{L_- = L_-^*, \, \Delta L \neq 0}$ failed to be $\prob$-evanescent, one would infer the existence of a stopping time $\tau$ with the property that $\prob \bra{\tau < \infty, \, L_{\tau -} = L_{\tau -}^*, \, \Delta L_{\tau} < 0} = \prob \bra{\tau < \infty} > 0$ holds. Recalling that $\prob \bra{\rho = \rl} = 1$ from Lemma \ref{lem: pre_final}, a conditional version of Lemma \ref{lem: Doob_maximal} gives
\[
\prob \bra{\rho = \tau \such \F_\tau} = \prob \bra{\rl = \tau \such \F_\tau} = 1 - \frac{L_{\tau}}{L^*_{\tau}} = 1 - \frac{L_{\tau -} + \Delta L_{\tau}}{L_{\tau -}}  = - \frac{\Delta L_{\tau}}{L_{\tau -}}.
\]
It follows that $\prob \bra{\rho = \tau \such \F_\tau} > 0$ holds on the $\F_\tau$-measurable event $\set{\tau < \infty, \, L_{\tau -} = L_{\tau -}^*, \, \Delta L_{\tau} < 0}$, implying that $\prob \bra{\Delta L_{\rho} < 0 } \geq \prob \bra{\Delta L_{\tau} < 0, \, \rho = \tau} > 0$, which is a contradiction. We deduce that $\set{L_- = L_-^*, \, \Delta L \neq 0}$ is $\prob$-evanescent, i.e., that $L \in \Lc$.
\end{proof}

\bibliographystyle{alpha}
\bibliography{hon_times_spa}
\end{document}